\documentclass{article}
\usepackage[utf8]{inputenc}

\pdfoutput=1
\usepackage{mathtools}
\usepackage{amsfonts}
\usepackage{amssymb}
\usepackage{centernot}
\usepackage{graphicx}
\usepackage{amsthm}
\usepackage{enumerate}

\usepackage{tikz}
\usepackage{tikz-cd}
\usetikzlibrary{decorations.markings}
\usetikzlibrary{arrows}
\usetikzlibrary{calc}
\providecommand{\U}[1]{\protect\rule{.1in}{.1in}}
\newtheorem{theorem}{Theorem}

\newtheorem{lemma}[theorem]{Lemma}

\newtheorem{proposition}[theorem]{Proposition}

\theoremstyle{remark}

\newcommand{\Z}{\mathbb{Z}}

\allowdisplaybreaks

\begin{document}

\title{A note on longitudes of virtual knots}
\author{Daniel S. Silver
\and
Lorenzo Traldi}

\date{ }
\maketitle

\begin{abstract}
It is a famous property that a longitude of a classical knot lies in the second commutator subgroup of the knot group. We observe that the same property holds for a longitude of a virtual knot.

\emph{Keywords}: Alexander module; meridian; longitude; second commutator subgroup; virtual knot.

Mathematics Subject Classification 2020: 57K12
\end{abstract}

\section{Introduction}\label{intro}

We use fairly standard knot-theoretic terminology regarding virtual knots and diagrams. In particular, an \emph{arc} of a diagram extends from one underpassing point to another. (These are sometimes called ``long arcs.'') If $D$ is an oriented virtual knot diagram then the (Wirtinger) \emph{group} $G(D)$ is given by a presentation with a generator $g_a$ for each arc $a$ of $D$, and a relation $g_bg_a^w=g_a^w g_c $ for each classical crossing of $D$, as pictured in Fig. \ref{fig1}. 

\begin{figure} [tbh]
\centering 
\begin{tikzpicture} [>=angle 90]
\draw [thick] [->] (1,.5) -- (-0.8,-0.4);
\draw [thick] (-1,-.5) -- (-0.8,-0.4);
\draw [thick] (-1,.5) -- (-0.8,.4);
\draw [thick] [->] (-.2,.1) -- (-0.8,0.4);
\draw [thick] (0.2,-0.1) -- (1,-.5);
\node at (1.3,0.5) {$a$};
\node at (-1.3,0.6) {$c$};
\node at (1.3,-0.5) {$b$};

\draw [thick]  (6,.5) -- (4.4,-0.3);
\draw [thick] [<-] (4.4,-0.3) -- (4,-.5);
\draw [thick] (4,.5) -- (4.2,0.4);
\draw [thick] [->] (4.8,0.1) -- (4.2,0.4);
\draw [thick] (5.2,-0.1) -- (6,-.5);
\node at (6.3,0.5) {$a$};
\node at (3.7,0.6) {$c$};
\node at (6.3,-0.5) {$b$};

\node at (0,-1) {$w=-1$};
\node at (5,-1) {$w=1$};
\end{tikzpicture}
\caption{Crossings of writhe $-1$ and $1$.}
\label{fig1}
\end{figure}

If $b$ is an arc of $D$ then the group element $g_b$ is the \emph{meridian} associated with $b$. The \emph{longitude} associated with $b$ is the group element $\ell_b$ constructed as follows. Starting in the middle of $b$, walk along $K$ in the direction of its orientation. Construct a word in the generators by appending, at each crossing, the monomial $g_a^w$, where $a$ is the overpassing arc and $w$ is the writhe of the crossing, as indicated in Fig.\ \ref{fig1}. At the end, append a power $g_b^p$ so that the resulting word has exponent sum $0$.

The longitudes have the following property.

\begin{lemma}\label{commlem}
For every arc $b_0$ of $D$, the group elements $g_{b_o}$ and $\ell_{b_o}$ commute.
\end{lemma}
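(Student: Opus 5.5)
We argue by walking around the knot and tracking how the meridian changes arc by arc, exactly as in the classical proof. Label the arcs in the order they are met when walking from the middle of $b_0$ along the orientation: $b_0, b_1, \dots, b_{n-1}, b_n=b_0$. Between $b_{i-1}$ and $b_i$ the walk passes under a classical crossing, with overpassing arc $a_i$ and writhe $w_i$. Overpassings met along the way contribute nothing to the meridian, and virtual crossings are ignored entirely. Let $u_i$ be the word obtained by appending, at each crossing met (over or under) between the start and the end of arc $b_{i}$, the monomial $g_a^w$, where $a$ is the overpassing arc and $w$ is the writhe. Then $\ell_{b_0}=u_n g_{b_0}^p$ for the appropriate $p$.

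The key step is an inductive claim: for each $i$,
\[ g_{b_i} = u_i^{-1}\, g_{b_0}\, u_i \]
holds in $G(D)$. The claim is trivial at $i=0$, where $u_0$ is the empty word.

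For the inductive step, first consider a crossing where the walk passes over an arc. There $a$ is the current arc $b_i$ itself, so the appended letter is $g_{b_i}^{\pm1}$. Conjugating $g_{b_i}$ by a power of itself does not change it, so the form of the claim is preserved.

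Next consider an undercrossing, from $b_{i-1}$ to $b_i$ under $a_i$ with writhe $w_i$. We must check, using Fig.~\ref{fig1}, which incoming and outgoing arcs play the roles of $b$ and $c$ in the relation $g_bg_a^w=g_a^wg_c$. The claim is that in both writhe cases the relation reads
\[ g_{b_i}=g_{a_i}^{-w_i}\,g_{b_{i-1}}\,g_{a_i}^{w_i}, \]
that is, the outgoing under-arc generator is the conjugate of the incoming one by the appended monomial $g_{a_i}^{w_i}$. Given this, the induction passes because $u_i=u_{i-1}g_{a_i}^{w_i}$. Verifying the orientation and sign conventions in Fig.~\ref{fig1} is the only real obstacle. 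I would read off from the picture that the under-strand runs from $b$ to $c$. If instead it runs from $c$ to $b$, the longitude convention should still match after rewriting the relation as $g_c=g_a^{-w}g_bg_a^{w}$, or its inverse version, with the sign chosen consistently. I will check both crossings explicitly.

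Applying the claim at $i=n$, where $b_n=b_0$, gives $g_{b_0}=u_n^{-1}g_{b_0}u_n$. So $u_n$ commutes with $g_{b_0}$, and hence so does $\ell_{b_0}=u_ng_{b_0}^p$. This proves Lemma~\ref{commlem}.
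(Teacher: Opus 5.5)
Your proposal is correct and is essentially the paper's proof. Your invariant $g_{b_0}u_i=u_ig_{b_i}$ is exactly what the paper obtains by pushing $g_{b_0}$ rightward through the word with the relations $g_{b_i}g_{a_i}^{w_i}=g_{a_i}^{w_i}g_{b_{i+1}}$, and the figure check you deferred comes out as you guessed: in both crossings of Fig.~\ref{fig1} the under-strand runs from $b$ to $c$. Your handling of overcrossing letters $g_{b_i}^{\pm1}$ is a harmless extra, since the paper's proof records only undercrossing letters.
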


\begin{proof} Index the arcs of $D$ as $b_0, \dots, b_{n-1},b_n=b_0$ in order, according to the orientation of $D$. For each index $i \in \{0, \dots, n-1\}$, let $a_i$ be the overpassing arc that separates $b_i$ from $b_{i+1}$, and let $w_i$ be the writhe of that crossing. The defining relations of $G(D)$ give us  $g_{b_i}g_{a_i}^{w_i}=g_{a_i}^{w_i}g_{b_{i+1}}$ for $0 \leq i <n$, so 
\begin{align*}
g_{b_o} \ell_{b_o} &= g_{b_o} g_{a_0}^{w_0}g_{a_1}^{w_1} g_{a_2}^{w_2}\dots g_{a_{n-1}}^{w_{n-1}}=g_{a_0}^{w_0} g_{b_1} g_{a_1}^{w_1} g_{a_2}^{w_2}\dots g_{a_{n-1}}^{w_{n-1}}\\
&=g_{a_0}^{w_0}  g_{a_1}^{w_1} g_{b_2} g_{a_2}^{w_2}\dots g_{a_{n-1}}^{w_{n-1}} = \dots = g_{a_0}^{w_0} g_{a_1}^{w_1} g_{a_2}^{w_2}\dots g_{b_{n-1}} g_{a_{n-1}}^{w_{n-1}}\\
&=g_{a_0}^{w_0} g_{a_1}^{w_1} g_{a_2}^{w_2}\dots  g_{a_{n-1}}^{w_{n-1}}g_{b_{n}} = \ell_{b_o} g_{b_o} .
\qedhere \end{align*} \end{proof}

It is a well-known fact that a longitude of a classical knot diagram $D$ lies in the second commutator subgroup $G(D)''$. The standard proof of this fact involves a Seifert surface; see \cite[p.\ 39]{BZ03}, for instance. Not all virtual knot diagrams have Seifert surfaces, so it might seem possible that a non-classical virtual knot diagram could have a longitude outside of $G(D)''$. Indeed, in July of 2026 Google's AI feature (mistakenly) asserted that this possibility can be realized, and that examples have appeared in a paper of Kim \cite{K}. Surprised by the apparent contradiction of Traldi's result that a longitude of a virtual knot is 0 in the Alexander module \cite[Cor. 11]{T}, and concerned that some technical difference in definitions might have created a discrepancy, we investigated and came to the following conclusion:
\begin{theorem} \label{main}
    The longitudes of a virtual knot diagram $D$ must lie in $G(D)''$.
\end{theorem}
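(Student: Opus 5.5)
The plan is to pass to the Alexander module $G(D)'/G(D)''$ and combine Lemma \ref{commlem} with a Nakayama-type argument. Write $G=G(D)$ and fix an arc $b$ with meridian $m=g_b$.

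\emph{Step 1: the abelianization.} Each crossing relation $g_bg_a^w=g_a^wg_c$ makes $g_b$ and $g_c$ conjugate. Moving along the (connected) knot, all the generators $g_{b_i}$ are therefore conjugate. Hence $G/G'\cong\Z$, with $m$ mapping to a generator and the isomorphism given by exponent sum. The longitude $\ell_b$ has exponent sum $0$ by construction, so $\ell_b\in G'$.

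\emph{Step 2: module structure.} Set $M=G'/G''$. It is a module over $\Lambda=\Z[t^{\pm1}]$, with $t$ acting by conjugation by $m$. Since $G$ is finitely generated and $G'$ has finite index in no sense but is the kernel of a map to $\Z$, the standard Reidemeister--Schreier/Fox calculus argument shows $M$ is finitely generated over $\Lambda$. Concretely, it is generated by the images of $g_{b_i}m^{-1}$.

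Let $x$ be the image of $\ell_b$ in $M$. By Lemma \ref{commlem}, $m\ell_bm^{-1}=\ell_b$, so $tx=x$, that is, $(t-1)x=0$. It therefore suffices to show that multiplication by $t-1$ is injective on $M$.

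\emph{Step 3: $M=(t-1)M$.} We have $(t-1)M=[G,G']G''/G''$. Since $G=\langle m\rangle G'$, every commutator of two elements of $G$ is, modulo $[G,G']$, a product of commutators of the form $[m^k,m^j]$, which are trivial. Hence $G'=[G,G]=[G,G']$, and so $M/(t-1)M=G'/[G,G']G''=0$.

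\emph{Step 4: injectivity of $t-1$.} Because $M$ is finitely generated over the commutative ring $\Lambda$ and $M=(t-1)M$, the determinant trick (Nakayama's lemma) gives an element $r\in\Lambda$ with $r\equiv1 \pmod{t-1}$ and $rM=0$. Write $1-r=s(t-1)$ for some $s\in\Lambda$. Then for our $x$,
\[
x=rx+(1-r)x=0+s(t-1)x=0 .
\]
Hence $\ell_b\in G''$, which proves Theorem \ref{main}.

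The main obstacle is Step 2: making sure $M$ is finitely generated over $\Lambda$ for a virtual knot group. This should follow exactly as in the classical case from the finite Wirtinger presentation, via Fox calculus or the Reidemeister--Schreier generators $m^k(g_{b_i}m^{-1})m^{-k}$. Nothing in that argument uses planarity, so I expect this step to go through, but it is the one point that needs care. Steps 3 and 4 are purely algebraic.
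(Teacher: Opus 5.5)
Your proposal is correct and follows the same route as the paper's first proof: $\ell_b\in G'$ by exponent sum, Lemma \ref{commlem} gives $(t-1)x=0$ in $G'/G''$, multiplication by $t-1$ is surjective on $G'/G''$, and finite generation over $\Lambda$ turns surjectivity into injectivity. The differences lie only in how the sub-steps are justified: you get surjectivity from the general fact that $G'=[G,G']$ whenever $G/G'$ is cyclic (rather than from the explicit identity $[g_a,g_b]G''=(t-1)\cdot(g_bg_a^{-1}G'')$), you get injectivity from Nakayama's determinant trick, which works over any commutative ring (rather than from $\Lambda$ being Noetherian), and you make finite generation explicit via Reidemeister--Schreier.
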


We provide two short proofs of Theorem \ref{main} below.

\section{First proof}\label{first}

Our first proof of Theorem \ref{main} is completely algebraic. We include some details for the convenience of the reader, but we should make it clear that the lemmas are standard parts of the theory of knot groups and modules.

The elementary proof of Lemma \ref{module} is left as an exercise.

\begin{lemma}\label{module}
Let $G$ be an arbitrary group, with commutator subgroup $G'$ and abelianization $H=G/G'$. Then the abelian group $G'/G''$ is a module over $H$ with scalar multiplication defined as follows. If $g \in G$ and $x \in G'$ then $$ (gG') \cdot (xG'') = gxg^{-1}G''.$$
\end{lemma}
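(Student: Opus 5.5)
The statement to prove is Lemma \ref{module}: for any group $G$, the formula $(gG')\cdot(xG'') = gxg^{-1}G''$ gives a well-defined action of $H=G/G'$ on the abelian group $G'/G''$, and this action extends to a $\Z H$-module structure. I would verify directly that the formula is well defined, that it is an action by group automorphisms, and then extend linearly.

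\textbf{Step 1: preliminaries.} First note that $G'/G''$ is abelian, because $G''=[G',G']$. Also $G'$ and $G''$ are characteristic, hence normal, in $G$. So for $g\in G$ and $x\in G'$ we have $gxg^{-1}\in G'$, and conjugation by $g$ maps $G''$ into $G''$. It follows that the conjugation automorphism $c_g$ of $G$ descends to an automorphism $\bar c_g$ of $G'/G''$, given by $xG''\mapsto gxg^{-1}G''$.

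\textbf{Step 2: well-definedness.} Independence of the representative $x$ is exactly the descent in Step 1. For independence of $g$, suppose $g_1=gy$ with $y\in G'$. Then
\[ g_1xg_1^{-1}=g\,(yxy^{-1})\,g^{-1}. \]
Since $y,x\in G'$, the commutator $yxy^{-1}x^{-1}$ lies in $[G',G']=G''$, so $yxy^{-1}G''=xG''$. Applying $\bar c_g$ to both sides gives $g_1xg_1^{-1}G''=gxg^{-1}G''$. This is the only point where one uses that $G'$ acts trivially on $G'/G''$, precisely because the quotient is abelian.

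\textbf{Step 3: action axioms and module structure.} The action axioms are immediate:
\begin{itemize}
\item $1$ acts trivially;
\item $\bar c_{gh}=\bar c_g\circ\bar c_h$;
\item each $\bar c_g$ is additive, since $g(xz)g^{-1}=(gxg^{-1})(gzg^{-1})$.
\end{itemize}
Hence $H$ acts on $G'/G''$ by group automorphisms. Extending $\Z$-linearly, $\left(\sum n_h h\right)\cdot m=\sum n_h\,(h\cdot m)$, makes $G'/G''$ a $\Z H$-module. Distributivity and associativity of this extension follow from additivity of each $\bar c_g$ and from the composition rule.

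The only genuine point is Step 2, independence of the coset representative $g$. It is settled by the observation that conjugation by elements of $G'$ is trivial modulo $G''$.
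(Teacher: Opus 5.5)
Your proof is correct, and it is the standard elementary verification that the paper leaves as an exercise: conjugation descends to $G'/G''$ because $G'$ and $G''$ are normal in $G$, and the value is independent of the representative $g$ because $G'$ acts trivially on the abelian quotient $G'/G''$. The action axioms and the $\Z$-linear extension then give the $\Z H$-module structure, which is the reading of ``module over $H$'' that the paper itself uses when it identifies $\Z H$ with $\Lambda$.
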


Lemma \ref{module} allows us to manipulate elements of $G'/G''$ using conventional notation for module operations. This must be done with care, remembering that the notation does not apply to arbitrary elements of $G/G''$. For instance the calculation
\[
[x,y] = xyx^{-1}y^{-1}=x+y+x^{-1}+y^{-1}=x+y-x-y=x-x+y-y=0
\]
is perfectly valid for $x,y\in G'/G''$, but not valid for general elements of $G/G''$. 

Now, suppose $D$ is a virtual knot diagram. 

\begin{lemma}\label{zero}
    The commutator subgroup $G(D)'$ includes every element of $G(D)$ that can be expressed as a product $\Pi g_{a_i} ^{n_i}$ with $\sum n_i = 0$.
\end{lemma}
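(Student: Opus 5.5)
The plan is to show that the abelianization $H = G(D)/G(D)'$ is infinite cyclic, generated by the common image of all the meridians. Then a product $\Pi g_{a_i}^{n_i}$ maps to the $(\sum n_i)$-th power of that generator. If $\sum n_i = 0$, this image is trivial, so the product lies in $G(D)'$.

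The key step is to show that all meridian generators become equal in $H$. For this I would use the Wirtinger relations from Fig. \ref{fig1}. At each classical crossing the relation $g_bg_a^w=g_a^w g_c$ becomes $[g_b] + w[g_a] = w[g_a] + [g_c]$ in the abelian group $H$, written additively. Hence $[g_b]=[g_c]$. As in the proof of Lemma \ref{commlem}, index the arcs $b_0,\dots,b_{n-1}$ in order along the orientation of $D$. The crossing separating $b_i$ from $b_{i+1}$ then gives $[g_{b_i}]=[g_{b_{i+1}}]$.

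Since $D$ is a knot diagram (one component), every arc appears in this cyclic list. Virtual crossings contribute no arcs and no relations, so they do not break the chain. Chaining the equalities shows that every $g_a$ has the same image $t\in H$. The overpassing arcs $a_i$ are among the $b_j$, so this covers every generator.

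Now take an element $x=\Pi g_{a_i}^{n_i}$ with $\sum n_i=0$. Because $H$ is abelian, the image of $x$ in $H$ is $\sum n_i\,[g_{a_i}] = (\sum n_i)t = 0$. So $x$ lies in the kernel of $G(D)\to H$, which is $G(D)'$.

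There is no real obstacle here. The only point needing care is the single-component hypothesis, which is what makes all the arcs form one cycle. We do not even need $H\cong\Z$; the conclusion only uses that all generators share a common image in $H$.
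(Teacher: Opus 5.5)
Your argument is correct, but it takes a different route from the paper's. You pass to the abelianization. The Wirtinger relations collapse to $[g_b]=[g_c]$ in $H$, and chaining along the single component shows that all meridians have a common image $t$. Then $\Pi g_{a_i}^{n_i}$ maps to $(\sum n_i)t=0$, so it lies in $\ker(G(D)\to H)=G(D)'$.

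The paper instead works directly with words. It uses the fact that any two meridians are conjugate in $G(D)$, a non-abelian fact in which the one-component hypothesis is tacitly used. From this it writes $g_ag_b^{-1}=[g_a,x]$ and observes that $g_a^{-1}g_b$ is a conjugate of $g_bg_a^{-1}$. It then inducts on word length: it cyclically permutes (i.e.\ conjugates) the word so that its last two letters have opposite signs, which splits it into two shorter products of exponent sum $0$.

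The paper's route is constructive: it exhibits the element explicitly as a product of conjugates of commutators. Your route is shorter and needs only equality of images in $H$ rather than conjugacy in $G(D)$. It also makes the single-component hypothesis explicit. In passing, it establishes the paper's subsequent remark that $H$ is generated by $t=g_bG(D)'$ for every arc $b$, reversing the order in which the paper presents these two facts.
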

\begin{proof}
    A product $g_a g^{-1}_b$ lies in $G(D)'$ because the fact that $g_b$ is a conjugate of $g_a$ implies that $g_a g^{-1}_b$ is a commutator: if $g_b= x g_a x^{-1}$ then $g_a g^{-1}_b = g_a x g_a^{-1} x^{-1} = [g_a,x]$. A product $g_a^{-1} g_b$ is equal to $g_a^{-1} \cdot g_b g_a^{-1}  \cdot g_a$, a conjugate of the commutator $g_b g_a^{-1}$. Proceeding inductively, a longer product can be conjugated to yield a new product in which the last two terms are of opposite sign. Then this new product is equal to $xy$, where $x$ and $y$ are separate shorter products, each with exponent sum $0$.
\end{proof}

It is easy to see that the abelianization $H=G(D)/G(D)'$ is an infinite cyclic group, generated by an element $t$ that is equal to $g_bG(D)'$ for every arc $b$ of $D$. The integral group ring $\Z H$ is conventionally identified with the ring $\Lambda=\Z[t,t^{-1}]$ of Laurent polynomials in the variable $t$, and the $\Lambda$-module $\Lambda \oplus G(D)'/G(D)''$ is the \emph{Alexander module} of $D$. Using module notation, we have the following.

\begin{lemma}\label{auto}
    If $D$ is a virtual knot diagram then multiplication by $t-1$ defines an automorphism of $G(D)'/G(D)''$.
\end{lemma}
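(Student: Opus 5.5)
Fix an arc $b$ and write $g=g_b$, so $t=gG(D)'$. For $x\in G(D)'$ we have, in module notation, $(t-1)\cdot x = gxg^{-1}x^{-1}G''$. We need to show that $x\mapsto [g,x]$ induces a bijection of $G(D)'/G(D)''$.

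\emph{Surjectivity.} The plan is a generator argument.

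\begin{enumerate}[(i)]
\item By Lemma~\ref{zero} and its proof, $G(D)'$ is generated by conjugates of the elements $g_a g^{-1}$, where $a$ runs over the arcs. Hence $G(D)'/G(D)''$ is generated as a $\Lambda$-module by the classes $u_a = g_a g^{-1}G''$. In fact the conjugates of $g_ag^{-1}$ by powers of $g$ suffice, since $G/G'$ is generated by $t$.
\item Every meridian is conjugate to $g$, so $g_a = y g y^{-1}$ for some $y\in G(D)$. Then $g_ag^{-1} = ygy^{-1}g^{-1} = [y,g]$.
\item Write $y = g^k z$ with $z\in G(D)'$; this is possible because $H$ is generated by $t$. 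The class of $[g^kz,g]$ in $G'/G''$ equals $t^k\cdot[z,g] + [g^k,g] = t^k\cdot [z,g]$.
\item Since $[z,g] = z g z^{-1} g^{-1} = -(t-1)\cdot z$ in module notation, each $u_a$ lies in $(t-1)\cdot(G'/G'')$.
\end{enumerate}

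The expansion in step (iii) must be checked carefully, because $g^k\notin G'$ and so module notation does not apply to it. I expect to verify directly that $[ab,c]=a[b,c]a^{-1}[a,c]$. Since $(t-1)$ times the module is a submodule containing all the generators $u_a$, multiplication by $t-1$ is onto.

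\emph{Injectivity.} This is the main obstacle. My first attempt is to use that $G(D)'/G(D)''$ is finitely generated over $\Lambda$. This holds because it is generated by the finitely many $u_a$, and $\Lambda$ is Noetherian. For a finitely generated module $M$ over a commutative ring, a surjective endomorphism is injective (Vasconcelos' theorem, a consequence of Nakayama's lemma applied to $M$ as a module over $\Lambda[X]$ with $X$ acting by the endomorphism). Applying this to multiplication by $t-1$, which is surjective by the first part, gives injectivity, and hence multiplication by $t-1$ is an automorphism.

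If one prefers to avoid citing that theorem, the fallback is a presentation argument. From the Wirtinger presentation with $u_a$ as generators, one reads off a presentation matrix for $G'/G''$. One would then observe that setting $t=1$ makes the relations say $u_a=u_c$ modulo the others, so the module mod $(t-1)$ vanishes. Finite generation plus Nakayama-type reasoning then yields the same conclusion. I would go with the Vasconcelos route.
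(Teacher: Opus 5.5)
Your proof is correct and takes essentially the same route as the paper: you show that a finite set of $\Lambda$-module generators of $G(D)'/G(D)''$ lies in $(t-1)\cdot G(D)'/G(D)''$, which gives surjectivity, and then use finite generation to conclude injectivity. The differences are minor: you use the generators $g_ag^{-1}$ and the conjugacy of meridians, where the paper computes $[g_a,g_b]G(D)''=(t-1)\cdot(g_bg_a^{-1}G(D)'')$ directly, and you cite Vasconcelos' theorem, where the paper uses that a surjective endomorphism of a Noetherian module is injective.
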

\begin{proof}
    Observe that if $a$ and $b$ are arcs of $D$ then the element of $G(D)'/G(D)''$ defined by the commutator of $g_a$ and $g_b$ is an element of $(t-1) \cdot G(D)'/G(D)''$:
    \begin{align*}[g_a,g_b]G(D)'' & = g_a g_b g_a^{-1} g_b^{-1}G(D)''  = g_a  (g_b g_a^{-1} g_b^{-1} g_a) g_a^{-1}G(D)'' \\ & =  t \cdot (g_b g_a^{-1} g_b^{-1} g_aG(D)'')= t \cdot (g_b g_a^{-1}G(D)'') + t \cdot ( g_b^{-1} g_aG(D)'')  \\ & = t \cdot (g_b g_a^{-1}G(D)'') +  g_b( g_b^{-1} g_a)  g_b^{-1}G(D)'' \\ & = t \cdot (g_b g_a^{-1}G(D)'') +  g_a  g_b^{-1}G(D)'' \\  &= t \cdot (g_b g_a^{-1}G(D)'') +(g_b g_a^{-1})^{-1}G(D)'' \\ & = t \cdot (g_b g_a^{-1}G(D)'') - g_b g_a^{-1}G(D'') = (t-1) \cdot (g_b g_a^{-1}G(D)'').\end{align*}

    The commutators $ [g_a,g_b]$ generate $G(D)'$ as a normal subgroup of $G(D)$. That is, the group $G(D)'$ is generated by the commutators $ [g_a,g_b]$ and their conjugates. It follows that the abelian group $G(D)'/G(D)''$ is generated by the images of  the commutators $ [g_a,g_b]$ and their conjugates. Every conjugation is a conjugation by some product of the generators $g_a$; in the module $G(D)'/G(D)''$ this is equivalent to scalar multiplication by some power of $t$. We conclude that $G(D')/G(D)''$ is generated, as a module, by the various elements  $ [g_a,g_b]G''$. As shown in the first paragraph, these elements are all contained in the submodule $(t-1)\cdot G(D)'/G(D)''$. It follows that $(t-1)\cdot G(D)'/G(D)''$ is the entire module $G(D)'/G(D)''$.

Multiplication by a fixed scalar defines an endomorphism of any module over a commutative ring. It follows that multiplication by $t-1$ defines a surjective endomorphism of $G(D)'/G(D)''$. As $\Z[t,t^{-1}]$ is a Noetherian ring, the finitely generated module $G(D)'/G(D)''$ is Noetherian too. Therefore a surjective endomorphism is an automorphism. \end{proof}

\begin{proposition}\label{longann}
    Suppose $y \in G(D)'$ and $D$ has an arc $b$ such that $y G(D)''$ commutes with $g_b G(D)''$ in $G(D)/G(D)''$. Then $y \in G(D)''$.
\end{proposition}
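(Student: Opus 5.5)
The plan is to translate the commuting hypothesis into a module equation in $G(D)'/G(D)''$ and then apply Lemma \ref{auto}. Write $x = yG(D)'' \in G(D)'/G(D)''$. Since $y \in G(D)'$, Lemma \ref{module} applies and gives
\[
t \cdot x = g_b y g_b^{-1} G(D)'' .
\]
Here we use that $g_bG(D)' = t$ for every arc $b$, as noted before Lemma \ref{auto}.

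Next we use the hypothesis. Because $yG(D)''$ commutes with $g_bG(D)''$ in $G(D)/G(D)''$, we have $g_b y g_b^{-1} G(D)'' = yG(D)''$. In module notation this says $t\cdot x = x$, that is, $(t-1)\cdot x = 0$.

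By Lemma \ref{auto}, multiplication by $t-1$ is an automorphism of $G(D)'/G(D)''$, and in particular it is injective. Hence $x=0$, which means $y\in G(D)''$.

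The only point needing care is the first step: the module notation may be applied only to elements of $G(D)'/G(D)''$, so we must check that $y \in G(D)'$ (given by hypothesis) before rewriting conjugation as multiplication by $t$. Once that is done, the argument is immediate. All the real work, namely surjectivity of multiplication by $t-1$ together with the Noetherian argument, has already been carried out in Lemma \ref{auto}.
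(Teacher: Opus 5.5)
Your proposal is correct and matches the paper's proof essentially verbatim. The commuting hypothesis gives $t\cdot(yG(D)'') = yG(D)''$, hence $(t-1)\cdot(yG(D)'')=0$, and the injectivity of multiplication by $t-1$ from Lemma \ref{auto} forces $yG(D)''=0$.
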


\begin{proof}
    We have $g_b y g_b^{-1} G(D)'' = y G(D)''$, and hence $ t \cdot (y G(D)'') = y G(D)''$; therefore $(t-1)\cdot (yG(D)'') = 0.$ Multiplication by $t-1$ defines an automorphism of $G(D)'/G(D)''$, so $yG(D)''=0$ in $G(D)'/G(D)''$.
\end{proof}

Theorem \ref{main} follows from Proposition \ref{longann} along with Lemmas \ref{commlem}, \ref{zero} and \ref{auto}.

\section{Second proof}\label{secondp}
We adapt a well-known homological argument of \cite{Mi} (see also \cite{Go}) to prove:

\begin{proposition} \label{second} Let $G$ be a finitely presented group with infinite cyclic abelianization $G/G'$. Assume that  $xG'$ generates $G/G'$,  for some $x \in G$. If an element $y \in G'$ commutes in $G$ with $x$, then $y$ is contained in $G''$. \end{proposition}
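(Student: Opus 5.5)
We argue homologically, following Milnor. Let $X$ be a finite 2-complex with $\pi_1(X)=G$; one exists because $G$ is finitely presented. Let $\tilde X \to X$ be the infinite cyclic cover corresponding to $G'$, so that $\pi_1(\tilde X)=G'$ and $H_1(\tilde X;\Z)\cong G'/G''$. The deck group is $G/G'\cong\langle t\rangle$, where $t$ is the image of $x$, and it makes $H_1(\tilde X)$ a finitely generated module over $\Lambda=\Z[t,t^{-1}]$. This module structure is exactly the one in Lemma \ref{module}. The hypothesis that $y$ commutes with $x$ gives $t\cdot(yG'')=xyx^{-1}G''=yG''$, so $(t-1)\cdot(yG'')=0$. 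It therefore suffices to show that multiplication by $t-1$ is injective on $H_1(\tilde X)$.

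For this we use the short exact sequence of chain complexes
\[0\to C_*(\tilde X)\xrightarrow{\,t-1\,}C_*(\tilde X)\to C_*(X)\to 0,\]
which holds because $C_*(\tilde X)$ is a free $\Lambda$-module on the cells of $X$ and $C_*(\tilde X)\otimes_\Lambda \Z = C_*(X)$. Its long exact sequence (the Wang/Milnor sequence) contains the segment
\[H_2(X)\to H_1(\tilde X)\xrightarrow{\,t-1\,}H_1(\tilde X)\to H_1(X)\to H_0(\tilde X)\xrightarrow{\,t-1\,}H_0(\tilde X)\to H_0(X)\to 0.\]
Here $H_0(\tilde X)=\Z$ with $t$ acting trivially, so the last map $t-1$ is zero. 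Hence $H_0(\tilde X)\to H_0(X)$ is an isomorphism, and the map $H_1(X)\to H_0(\tilde X)$ is onto $\Z$. But $H_1(X)=G/G'\cong\Z$, and a surjection $\Z\to\Z$ is an isomorphism, so the map $H_1(\tilde X)\to H_1(X)$ is zero. Therefore $t-1$ is surjective on $H_1(\tilde X)$.

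The main obstacle is passing from surjectivity to injectivity. The sequence only identifies the kernel with the image of $H_2(X)$, which need not vanish, so injectivity cannot be read off directly. Instead we repeat the Noetherian argument of Lemma \ref{auto}. Since $X$ is a finite complex, $C_1(\tilde X)$ is a finitely generated free $\Lambda$-module. Since $\Lambda$ is Noetherian, $H_1(\tilde X)$ is a finitely generated $\Lambda$-module, hence Noetherian. A surjective endomorphism of a Noetherian module is injective: the kernels of its iterates form an ascending chain, which must stabilize. So $t-1$ is an automorphism of $H_1(\tilde X)\cong G'/G''$.

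Consequently $yG''=0$, that is, $y\in G''$. Finite presentability of $G$ is used precisely to guarantee this finite generation. Applying the result with $G=G(D)$, $x=g_b$ and $y=\ell_b$ proves Theorem \ref{main}: $y\in G'$ by Lemma \ref{zero}, and $y$ commutes with $x$ by Lemma \ref{commlem}.
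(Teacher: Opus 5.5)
Your proof is correct and follows the paper's argument essentially step for step. You use the Milnor sequence $0\to C_*(\tilde X)\xrightarrow{t-1}C_*(\tilde X)\to C_*(X)\to 0$ to get surjectivity of $t-1$ on $H_1(\tilde X)\cong G'/G''$, then the Noetherian argument of Lemma \ref{auto} to upgrade surjectivity to injectivity, and you are right to attribute Noetherianity to $\Lambda=\Z[t,t^{-1}]$ and to invoke Lemma \ref{zero} for $y\in G'$ when deducing Theorem \ref{main}, two points the paper's text glosses over.
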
 

Theorem \ref{main} follows from Proposition \ref{second} and Lemma \ref{commlem} by letting $G$ be the virtual knot group, $x$ the class $g_b$ of a meridian and $y$ a longitude associated with $b$. 

\begin{proof} Let $X$ be the standard 2-complex with fundamental group $G$, and let $X_\infty$ be the covering complex with fundamental group $G'$. 
The deck transformation group of $X_\infty$ is infinite cyclic with generator corresponding to $x$. We denote its induced action on the chain complex $C_*(X_\infty)$ and homology groups $H_*(X_\infty)$ both by $t$. (Integer coefficients are assumed here and throughout.) On $H_1(X_\infty) \cong G'/G''$, the deck transformation $t$ corresponds to conjugation in $G$ by $x$. 

The short exact sequence of chain complexes 
$$0 \to C_*(X_\infty) \xrightarrow{t-1} C_*(X_\infty) \to C_*(X) \to 0$$
induces a long exact sequence in homology: 
$$\to H_1(X_\infty)  \xrightarrow{t-1} H_1(X_\infty )\to H_1(X) \to H_0(X_\infty)\xrightarrow{t-1} H_0(X_\infty)\to$$
which can be rewritten: 
$$\to H_1(X_\infty)  \xrightarrow{t-1} H_1(X_\infty )\to \Z \to \Z \xrightarrow{0} \Z \to $$
From this we see as we did in first proof that $t-1$ is surjective. Again since $H_1(X_\infty)$ is a finitely generated $Z[t, t^{-1}]$-module and $\Z$ is Noetherian, $t-1$ is also injective. Now our hypothesis that $y$ commutes in $G$ with $x$ implies that the class of $y$ in $H_1(X_\infty)\cong G'/G''$ is mapped trivially by $t-1$. Hence $y$ is contained in $G''$. 
\end{proof} 

\section{Conclusion}\label{discussion}

Theorem \ref{main} suggests that the algebraic properties of virtual knot longitudes resemble those of classical knot longitudes more closely than may have been thought, as both lie in $G(D)''$. However, the longitudes of classical knots have homological properties that are not required for the longitudes of virtual knots.  See \cite{K} for discussion.

For links, the analogue of the property that the longitudes of a knot lie in $G(D)''$ is that appropriately defined link longitudes sum to $0$ in the reduced Alexander module. Unlike the knot property, the link property does seem to require Seifert surfaces, as it fails for many non-classical virtual links. See \cite{periadd} for discussion.

\section*{ORCID} 

Daniel S. Silver https://orcid.org/0000-0003-1506-3525

\noindent Lorenzo Traldi https://orcid.org/0000-0003-1097-2818

\noindent Department of Mathematics and Statistics\\
\noindent University of South Alabama\\ Mobile, AL 36688 USA\\
\noindent Email: silver@southalabama.edu
\bigskip

\noindent Department of Mathematics\\
\noindent Lafayette College\\Easton, PA 18042 USA\\
\noindent Email: traldil@lafayette.edu\\ 


\begin{thebibliography}{99}

\bibitem{BZ03} G. Burde and H. Zieschang, {\sl Knots}, 2nd ed., Walter de Gruyter, Berlin, 2003.


\bibitem{Go} C.McA. Gordon, Some aspects of classical knot theory, in: {\sl Knot Theory}, Proc. Plans-sur-Bex, Switzerland, 1977 (ed. J.C. Hausmann), LNM 685, Springer-Verlag, Berlin, 1978, 1--60. 

\bibitem{K} S.-G. Kim, Virtual knot groups and their peripheral structure, {\it J. Knot Theory Ramifications} {\bf 9} (2000) 797-812.


\bibitem{Mi} J. Milnor, Infinite cyclic coverings, in: {\sl Conference on the Topology of Manifolds}, Prindle, Weber, and Schmidt, Boston MA 1968, 1151--33. 


\bibitem{periadd} D. S. Silver and L. Traldi, Peripheral elements in reduced Alexander modules: an addendum, {\it J. Knot Theory Ramifications} {\bf 33} (2024) 2471001.


\bibitem{T} L. Traldi, Peripheral elements in reduced Alexander modules,  {\sl J. Knot Theory Ramifications \bf 31} (2022), 2250058.


\end{thebibliography}
\end{document}